\newtheorem{lemma}{Lemma}[section]
\newtheorem{theorem}[lemma]{Theorem}
\newtheorem{proposition}[lemma]{Proposition}
\newtheorem{definition}[lemma]{Definition}
\newtheorem{corollary}[lemma]{Corollary}
\numberwithin{equation}{section}
\newcommand{\Z}{\mathrm{Z}}
\title{\textsf{On $\mathrm{ID}^{*}$-superderivations of Lie superalgebras }\footnote{Supported by the NSF
  of China (11471090, 11701158)}}
\author{\textsc{\textsc{Wende Liu}$^{1,}$ and \textsc{Mengmeng Cai}$^{2,}$\footnote{Correspondence: 1162823157@qq.com}}
\\
\\
\textit{$^1$School of Mathematics and Statistics,}
\textit{Hainan Normal University,}\\\textit{Haikou 571158, P. R. China}\\
 \textit{$^2$School of Mathematical Sciences},
  \textit{Harbin Normal University} \\
  \textit{Harbin 150025, P. R. China}
  }
\date{}
\begin{document}
\maketitle
\begin{quotation}
\small\noindent\textbf{Abstract}: Let $L$ be a Lie superalgebra over a  field of characteristic different from $2,3$ and write  $\mathrm{ID}^{*}(L)$ for the Lie superalgebra consisting of superderivations mapping $L$ to $L^{2}$ and the central elements to zero. In this paper we first give an upper bound for the superdimension of $\mathrm{ID}^{*}(L)$ by means of linear vector space decompositions. Then we characterize the $\mathrm{ID}^{*}$-superderivation superalgebras for the nilpotent Lie superalgebras of class 2 and the model filiform Lie superalgebras by methods of block matrices.

\vspace{0.2cm} \noindent{\textbf{Keywords}}:  $\mathrm{ID}^{*}$-superderivation; nilpotent Lie superalgebras of class 2; model filiform Lie superalgebras

\vspace{0.2cm} \noindent{\textbf{Mathematics Subject Classification 2010}}: 17B05, 17B30, 17B56
\end{quotation}

\setcounter{section}{0}

\section{Introduction and preliminaries}

As is well known, derivations and their various generalizations are important topics in the Lie (super)algebra theory (see \cite{tang}, for example). In 2015, H. Arabynai and F. Saeedi studied derivation algebras of Lie algebras and introduced the notion of $\mathrm{ID}^{*}$-derivations for Lie algebras \cite{E}. An $\mathrm{ID}^{*}$-derivation of a Lie algebra $L$ is a derivation  sending $L$ into $L^2$ and $\mathrm{Z}(L)$ to zero, where $L^2$ and $\mathrm{Z}(L)$ are the derived algebra and the center of $L$, respectively. All the $\mathrm{ID}^{*}$-derivations of a Lie algebra constitute a subalgebra of the full derivation algebra.

The notion of $\mathrm{ID}^{*}$-derivations may be naturally generalized to  Lie superalgebra case.  In this paper, we first  give an upper bound for the superdimension of the  $\mathrm{ID}^{*}$-superderivation superalgebra for a Lie superalgebra $L$, in terms of the superdimension  of $L^{2}$  and  the minimal generator number pair (see Definition \ref{d3}) of $L/\mathrm{Z}(L)$, where $L^2$ and $L/\mathrm{Z}(L)$ are the derived superalgebra and the central quotient of $L$, respectively. Then we show that the minimal generator number pairs  are unique for finite-dimensional nilpotent Lie superalgebras. Finally, we characterize the $\mathrm{ID}^{*}$-superderivation superalgebras of the nilpotent Lie superalgebras of class 2 and the model filiform Lie superalgebras in terms of block matrices, which  tell us a fact  that the upper bound which we have obtained for the superdimension of the $\mathrm{ID}^{*}$-superderivation superalgebra for a Lie superalgebra   is sharp.

Let $\mathbb{F}$ be the ground field of  characteristic different from $2, 3$ and $\mathbb{Z}_{2}:=\{\bar{0}, \bar{1}\}$   the abelian group of order $2$. For a homogeneous element $x$ in a vector superspace  $V=V_{\bar{0}}\oplus V_{\bar{1}}$, write $|x|$ for the
parity of $x$. In this paper the symbol $|x|$ implies that $x$ has been assumed to be a homogeneous element.

In $\mathbb{Z} \times \mathbb{Z}$, we define a partial order as follows:
$$(m, n)\leq(k, l) \Longleftrightarrow m\leq k, n\leq l.$$
For $m,n\in \mathbb{Z}$, we write $|(m,n)|=m+n.$ We also view $\mathbb{Z} \times \mathbb{Z}$ as an additive group in the usual way.

Write $\mathrm{sdim} V$ for the superdimension of a vector superspace $V$ and $\dim V$ for the dimension of  $V$ as an ordinary vector space. Then
$\dim V=|\mathrm{sdim} V|.$

A linear map  of parity $\alpha\in \mathbb{Z}$, $D: L\rightarrow L$, is said to be a superderivation of $L$, if
$$D[x, y]=[D(x), y]+(-1)^{\alpha|x|}[x, D(y)]$$ for all $x, y \in L$.
Denote by $\mathrm{Der}_{\alpha}(L)$ the set of all the superderivations of parity $\alpha$ of $L$, where $\alpha\in \mathbb{Z}_{2}$.
Then the superspace $\mathrm{Der}(L):=\mathrm{Der}_{\bar{0}}(L) \oplus \mathrm{Der}_{\bar{1}}(L)$ is a Lie superalgebra with respect to the usual bracket
$$[D, E]=DE - (-1)^{|D||E|}ED, $$
where $D, E \in \mathrm{Der}(L) $. The elements of $\mathrm{Der}(L)$ are called superderivations of $L$ and $\mathrm{Der}(L)$ is called the superderivation superalgebra of $L$.
For $x\in L$, the map $\mathrm{ad}x: L\rightarrow L $ given by $y\longmapsto [x, y]$ is a superderivation of $L$, which is said to be inner. The set of all inner superderivations of $L$ is denoted by $\mathrm{ad}(L)$. It is a standard fact that $\mathrm{ad}(L)$ is an superideal of $\mathrm{Der}(L)$.

As in Lie algebra case \cite{E}, a superderivation of Lie superalgebra $L$ is called an $\mathrm{ID}$-superderivation if it maps $L$ to the derived subsuperalgebra.
Denote by $\mathrm{ID}(L)$ the set of all $\mathrm{ID}$-superderivations of $L$, that is,
$$\mathrm{ID}(L)=\{\alpha \in \mathrm{Der}(L) \mid \alpha(L) \in L^{2}\}.$$
Hereafter,  write $L^2$ for the derived subsuperalgebra $[L, L]$.
Denote by $\mathrm{ID}^{*}(L)$ the set of all $\mathrm{ID}$-superderivations mapping all central elements to $0$, that is,
$$\mathrm{ID}^{*}(L)=\{\alpha \in \mathrm{ID}(L)\mid \alpha(\mathrm{Z}(L))=0\}.$$
Obviously, we have a sequence of subsuperalgebras:
$$\mathrm{ad}(L) \leq  \mathrm{ID}^{*}(L) \leq \mathrm{ID}(L) \leq \mathrm{Der}(L).$$

\section{An upper bound for the superdimension of $\mathrm{ID}^{*}(L)$}

To describe the upper bound for the superdimension of the $\mathrm{ID}^{*}$-superderivation superalgebra  for a given Lie superalgebra, we need the concept of minimal  generator number pairs for a Lie superalgebra.
Write $\{x_{1}, \ldots, x_{p} \mid y_{1}, \ldots, y_{q}\}$ implying that $x_{i}$ is even and $y_{j}$ is odd in a superspace.
\begin{definition}\label{d3}
A  homogeneous generator set of a Lie superalgebra $L$,
$$
\{x_{1}, \ldots, x_{p} \mid y_{1}, \ldots, y_{q}\},
$$
is said to be minimal  if $L$ can not be generated by any  subset of $L$,
$$
\{a_{1}, \ldots, a_{s} \mid b_{1}, \ldots, b_{t}\}
$$
with $(s, t)<(p, q)$. In this case, $(p, q)$ is called a minimal  generator number pair of $L$.
\end{definition}

For finite-dimensional nilpotent Lie superalgebras, the minimal  generator number pairs are unique (see Proposition \ref{pro1}).
 However, this fact does not necessarily  hold in general.

For a subsuperalgebra $K$ of $L$ and a pair of nonnegative integers $(p, q)$, write $\mathrm{\lambda}(K; p, q)$ for the number pair
$$
(p\cdot{\dim}K_{\bar{0}}+q\cdot{\dim}K_{\bar{1}}, q\cdot{\dim}K_{\bar{0}}+p\cdot{\dim}K_{\bar{1}}).
$$

The following theorem gives an upper bound for the superdimension of $\mathrm{ID}^{*}(L)$.

\begin{theorem}\label{c}
Suppose $L$ is a Lie superalgebra such that ${\dim}L^{2}<\infty$ and $L/\mathrm{Z}(L)$ is finitely generated. Then
$$
\mathrm{sdim}\mathrm{ID}^{*}(L)\leq \mathrm{\lambda}(L^{2}; p, q),
$$
where $(p, q)$ is a minimal generator number pair of $L/\mathrm{Z}(L)$. In particular, $\mathrm{ID}^{*}(L)$ is finite-dimensional.
\end{theorem}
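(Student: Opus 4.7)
The plan is to reduce the statement to counting images of a finite generator set of $L$ modulo $\mathrm{Z}(L)$. First I would fix a minimal homogeneous generator set
$$\{\bar{x}_1, \ldots, \bar{x}_p \mid \bar{y}_1, \ldots, \bar{y}_q\}$$
of $L/\mathrm{Z}(L)$ (which exists by the finite-generation hypothesis), lift each element to a homogeneous $x_i \in L_{\bar 0}$ or $y_j \in L_{\bar 1}$, and record the structural consequence that $L = K + \mathrm{Z}(L)$ as a vector superspace, where $K$ is the subsuperalgebra of $L$ generated by $x_1, \ldots, x_p, y_1, \ldots, y_q$.

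Next I would show that every $D \in \mathrm{ID}^{*}(L)$ of homogeneous parity $\alpha \in \mathbb{Z}_2$ is completely determined by the tuple
$$(D(x_1), \ldots, D(x_p), D(y_1), \ldots, D(y_q)).$$
Indeed, $D$ vanishes on $\mathrm{Z}(L)$, so for any $z = k + c$ with $k \in K$ and $c \in \mathrm{Z}(L)$ one has $D(z) = D(k)$; and since $D\!\mid_K$ is a superderivation of $K$, its values on the bracket words in the generators are forced by the superderivation identity applied inductively. Hence the evaluation map $D \mapsto (D(x_i), D(y_j))_{i,j}$ is an injective linear map of parity $\alpha$. Because $D(L) \subseteq L^{2}$ and $D$ is homogeneous, we have $D(x_i) \in L^{2}_{\alpha}$ and $D(y_j) \in L^{2}_{\alpha + \bar 1}$, so the image of this map lies in the direct sum of $p$ copies of $L^{2}_{\alpha}$ and $q$ copies of $L^{2}_{\alpha + \bar 1}$.

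Splitting by parity then gives
$$\dim \mathrm{ID}^{*}(L)_{\bar 0} \le p \dim L^{2}_{\bar 0} + q \dim L^{2}_{\bar 1}, \qquad \dim \mathrm{ID}^{*}(L)_{\bar 1} \le q \dim L^{2}_{\bar 0} + p \dim L^{2}_{\bar 1},$$
which is precisely the componentwise inequality $\mathrm{sdim}\,\mathrm{ID}^{*}(L) \le \lambda(L^{2}; p, q)$. Since $\dim L^{2} < \infty$, both upper bounds are finite, yielding the ``in particular'' clause at once.

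The subtlety I expect to spend the most care on is the very first reduction: the chosen $\bar x_i, \bar y_j$ generate only the quotient $L/\mathrm{Z}(L)$, not $L$ itself, so one must combine the decomposition $L = K + \mathrm{Z}(L)$ (which need not be a direct sum) with the hypothesis $D(\mathrm{Z}(L)) = 0$ to conclude that $D$ is determined by its values on the lifted generators. Notably, no appeal to the uniqueness of the minimal generator number pair is required here, since a single choice of $(p,q)$ is fixed up front; uniqueness (Proposition~\ref{pro1}) is only needed to make the symbol $\lambda(L^{2}; p, q)$ intrinsically attached to $L$ in the nilpotent setting treated later.
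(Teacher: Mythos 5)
Your proof is correct and takes essentially the same route as the paper's: evaluate each homogeneous $\mathrm{ID}^{*}$-superderivation on lifts of a minimal generator set of $L/\mathrm{Z}(L)$, observe that this evaluation map is injective, and count dimensions parity by parity to get $\mathrm{sdim}\,\mathrm{ID}^{*}(L)\leq \lambda(L^{2};p,q)$. The only difference is that you actually justify the injectivity (via $L=K+\mathrm{Z}(L)$, the vanishing of $D$ on $\mathrm{Z}(L)$, and the fact that a superderivation of $K$ is determined by its values on generators), a step the paper dismisses with ``clearly.''
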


\begin{proof} Suppose $\{x_{1}, \ldots, x_{p} \mid y_{1}, \ldots, y_{q}\}$ is a subset of $L$ such that
$$
\{x_{1}+\mathrm{Z}(L), \ldots, x_{p}+\mathrm{Z}(L) \mid y_{1}+\mathrm{Z}(L), \ldots, y_{q}+\mathrm{Z}(L)\}
$$
is a minimal generator set of $L/\mathrm{Z}(L)$.
Define
\begin{eqnarray*}
\phi: \mathrm{ID^{*}}_{\bar{0}}(L) &\longrightarrow& \underbrace{(L^{2})_{\bar{0}} \oplus \cdots \oplus (L^{2})_{\bar{0}}}_{\text{$p$ copies}} \oplus \underbrace{(L^{2})_{\bar{1}} \oplus \cdots \oplus (L^{2})_{\bar{1}}}_{\text{$q$ copies}}\\
\alpha &\longmapsto& \left(\alpha(x_{1}), \ldots, \alpha(x_{p}), \alpha(y_{1}), \ldots, \alpha(y_{q})\right).
\end{eqnarray*}
 Clearly,   $\phi$ is an injective linear map. Therefore
\begin{equation}\label{eqliu1049}
 \mathrm{dimID^{*}_{\bar{0}}}(L)\leq p\cdot{\dim}(L^{2})_{\bar{0}}+q\cdot{\dim}(L^{2})_{\bar{1}}.
 \end{equation}
 Similarly,
\begin{eqnarray*}
\varphi: \mathrm{ID^{*}}_{\bar{1}}(L) &\longrightarrow& \underbrace{(L^{2})_{\bar{0}} \oplus \cdots \oplus (L^{2})_{\bar{0}}}_{\text{$q$ copies}} \oplus \underbrace{(L^{2})_{\bar{1}} \oplus \cdots \oplus (L^{2})_{\bar{1}}}_{\text{$p$ copies}}\\
\beta &\longmapsto& (\beta(y_{1}), \ldots, \beta(y_{q}), \beta(x_{1}), \ldots, \beta(x_{p}))
\end{eqnarray*}
is an injective linear map and then
\begin{equation}\label{eqliu1050}
{\dim}\mathrm{ID}^{*}_{1}(L)\leq q\cdot{\dim}(L^{2})_{\bar{0}}+p\cdot{\dim}(L^{2})_{\bar{1}}.
\end{equation}
It follows from (\ref{eqliu1049}) and (\ref{eqliu1050}) that
$
\mathrm{sdim}\mathrm{ID}^{*}(L)\leq\mathrm{\lambda}(L^{2}; p, q).
$
\end{proof}

\begin{corollary}\label{tonggou}
Let $L$ be a Lie superalgebra. Then $\mathrm{ad}(L)$ is  finite-dimensional if and only if $\mathrm{ID^{*}}(L)$ is finite-dimensional.
\end{corollary}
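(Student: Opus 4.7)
The corollary is a quick consequence of Theorem~\ref{c} once one observes the classical Schur-type fact that $\dim(L/\mathrm{Z}(L))<\infty$ forces $\dim L^{2}<\infty$. My plan is to handle the two implications separately.

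The forward direction ($\mathrm{ID}^{*}(L)$ finite-dimensional $\Rightarrow$ $\mathrm{ad}(L)$ finite-dimensional) is immediate from the inclusion $\mathrm{ad}(L)\leq \mathrm{ID}^{*}(L)$ noted in the introduction, since a subsuperspace of a finite-dimensional superspace is finite-dimensional.

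For the backward direction, first I would invoke the standard isomorphism $\mathrm{ad}(L)\cong L/\mathrm{Z}(L)$ (whose kernel $\mathrm{Z}(L)$ is exactly the set of $x$ with $\mathrm{ad}\,x=0$). So if $\mathrm{ad}(L)$ is finite-dimensional, then $L/\mathrm{Z}(L)$ is finite-dimensional, and in particular finitely generated. The next step is to verify that $\dim L^{2}<\infty$ as well: pick homogeneous representatives $e_{1},\dots,e_{d}$ of a basis of $L/\mathrm{Z}(L)$, so that $L=\mathrm{Z}(L)+\mathrm{span}_{\mathbb{F}}\{e_{1},\dots,e_{d}\}$; since $[\mathrm{Z}(L),L]=0$, bilinearity gives
\[
L^{2}=[L,L]=\mathrm{span}_{\mathbb{F}}\{[e_{i},e_{j}]\mid 1\leq i,j\leq d\},
\]
which is finite-dimensional (of dimension at most $d^{2}$).

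With both hypotheses of Theorem~\ref{c} now verified, I would apply that theorem to a minimal generator number pair $(p,q)$ of $L/\mathrm{Z}(L)$ to conclude
\[
\mathrm{sdim}\,\mathrm{ID}^{*}(L)\leq \lambda(L^{2};p,q),
\]
whose right-hand side has finite components, so $\mathrm{ID}^{*}(L)$ is finite-dimensional. I do not anticipate any genuine obstacle: the only nontrivial step is the Schur-type reduction $L^{2}=\mathrm{span}\{[e_{i},e_{j}]\}$, and this is purely a bilinearity argument that carries over verbatim from the Lie algebra setting to the Lie superalgebra setting (the parity signs play no role in counting spanning sets).
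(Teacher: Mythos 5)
Your proposal is correct and follows essentially the same route as the paper: the forward direction from the inclusion $\mathrm{ad}(L)\leq\mathrm{ID}^{*}(L)$, and the backward direction from $L/\mathrm{Z}(L)\cong\mathrm{ad}(L)$ together with Theorem \ref{c}. The only difference is that you spell out the Schur-type spanning argument for $\dim L^{2}<\infty$, which the paper simply asserts.
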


\begin{proof} One direction is obvious. Let us suppose $\mathrm{ad}(L)$ is finite-dimensional. Then $L/\mathrm{Z}(L)\cong \mathrm{ad}(L)$  is finite-dimensional and so is $L^{2}$.
It follows from Theorem \ref{c} that $\mathrm{ID^{*}}(L)$ is finite-dimensional.
\end{proof}

\section{Nilpotent Lie superalgebras of class 2 and model filiform Lie superalgebras}

Let $L$ be a  Lie superalgebra over $\mathbb{F}$. Recall that the lower central series of $L$ is the sequence of superideals of $L$ defined inductively by $L^{1}=L$ and $L^{n}=[L^{n-1}, L]$ for $n\geq 2$.
If there exists $n\geq 2$ such that $L^{n}=0$, then $L$ is called a nilpotent Lie superalgebra.
The least integer $c$ for which $L^{c+1}=0$ is called the  class of $L$. Clearly, a Lie superalgebra is of class 1 if and only if it is abelian.

We also recall that the upper central series of $L$ is the sequence of superideals of $L$, $\Z_{i}(L)$, defined inductively:
$$\Z_{0}(L):=0,\;\Z_{i}(L)/\Z_{i-1}(L)=\Z(L/\Z_{i-1}(L))$$ for all $i\geq 1.$ Note that for $i\geq 1$,
\begin{align*}
\Z_{i}(L)&=\{x\in L\mid [x, L]\subset \Z_{i-1}(L)\}\\
&=\{x\in L\mid [[[x,\underbrace{L],L],\ldots,L}_{i\, \text{copies}}]=0\}.
\end{align*}
It is easy to see that a Lie superalgebra $L$ is of class $c$ if and only if
\begin{align*}
\Z_{c}(L)=L\; \text{and}\; \Z_{c-1}(L)\neq L.
\end{align*}
Moreover, for a nilpotent Lie superalgebra, the lower and upper central series have the same length.

We also use the notion of super-nilindex for a Lie superalgebra $L=L_{\bar{0}}\oplus L_{\bar{1}}$.
Write
$$
\mathcal{C}^{0}(L_{\alpha})=L_{\alpha},\ \mathcal{C}^{k+1}(L_{\alpha})=[L_{\bar{0}}, \mathcal{C}^{k}(L_{\alpha})],
$$
where $\alpha\in \mathbb{Z}_{2}$ and $k\geq 0$. If $L$ is nilpotent,  a pair $(p, q)$ of nonnegative integers is called the  super-nilindex of $L$, if
$$
\mathcal{C}^{p}(L_{\bar{0}})=0,\ \mathcal{C}^{p-1}(L_{\bar{0}})\neq 0;\ \mathcal{C}^{q}(L_{\bar{1}})=0,\ \mathcal{C}^{q-1}(L_{\bar{1}})\neq 0.
$$
Clearly, a Lie superalgebra  is of super-nilindex $(1,1)$ if and only if it is abelian with nontrivial even and odd parts.

A nilpotent Lie superalgebra of superdimension $(n+1, m)$, where  $n$ and $m$ are positive integers such that $n+m>2$, is said to be of  filiform if its super-nilindex is $(n, m)$. See \cite{I}, for example.

Let $L^{n,m}$ be a Lie superalgebra with  a homogeneous basis
\begin{equation}\label{q2}
\{x_{0}, \ldots, x_{n}\mid y_{1}, \ldots, y_{m}\}
\end{equation}
and multiplication given by
$$
[x_{0}, x_{i}]=x_{i+1},\ 1\leq i\leq n-1,\ [x_{0}, y_{j}]=y_{j+1},\ 1\leq j\leq m-1.
$$
Then $L^{n,m}$ is a filiform Lie superalgebra, which is called the model filiform Lie superalgebra of super-nilindex  $(n, m)$. For further information, see \cite{I}, for example. We should mention that any filiform Lie superalgebra is a deformation of a model one in some sense. See \cite{K} for more details.

Among all the nilpotent Lie superalgebras of class 2, very interesting ingredients are  the so-called generalized Heisenberg Lie superalgebras. See \cite{J} for the non-super case.
\begin{definition}\label{dd}
A  nonzero Lie superalgebra $H$ is called a generalized Heisenberg Lie superalgebra if $H^{2}=\mathrm{Z}(H)$.
\end{definition}

We note that a  Heisenberg Lie superalgebra  is a generalized Heisenberg Lie superalgebra  with a 1-dimensional center. See \cite{G} for more details.
Before considering   $\mathrm{ID}^{*}$-superderivation superalgebras of generalized Heisenberg Lie superalgebras, let us point out that  the minimal generator number pairs of nilpotent Lie superalgebras are unique. To prove this fact, we need a lemma as in the Lie algebra case. See \cite[Corollary 2]{H} and \cite[Lemma 2.1]{F}.

\begin{lemma}\label{lemmaliu2} Suppose $L$ is a finite-dimensional nilpotent Lie superalgebra.
Suppose $K$ is a subsuperalgebra of $L$ such  that $K+L^2=L$. Then $K=L$.\qed
\end{lemma}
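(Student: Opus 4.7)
The plan is to argue by induction on the nilpotency class $c$ of $L$, reducing the problem via the quotient $L/L^c$ to the observation that $L^c$ is central, which will make $L^2$ collapse into $K$.

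For the base case, when $c=1$, the superalgebra $L$ is abelian, so $L^2=0$ and the hypothesis $K+L^2=L$ gives $K=L$ immediately. For the inductive step, assume $c\geq 2$ and that the claim holds for all finite-dimensional nilpotent Lie superalgebras of class strictly less than $c$. Since $L^{c+1}=0$, we have $[L^c,L]=0$, so $L^c$ is a homogeneous superideal contained in $\mathrm{Z}(L)$, and $\bar{L}:=L/L^c$ is nilpotent of class $c-1$. Setting $\bar{K}:=(K+L^c)/L^c$ and reducing $K+L^2=L$ modulo $L^c$ yields $\bar{K}+\bar{L}^2=\bar{L}$, so by induction $\bar{K}=\bar{L}$, equivalently
$$K+L^c=L.$$

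The concluding step plugs this equality into the derived subsuperalgebra. Expanding
$$L^2=[K+L^c,\,K+L^c]\subseteq [K,K]+[K,L^c]+[L^c,K]+[L^c,L^c]$$
by super-bilinearity and killing every term involving $L^c$ via $[L^c,L]=0$, I obtain $L^2\subseteq [K,K]\subseteq K$. Combining with the original hypothesis gives $L=K+L^2\subseteq K$, so $K=L$.

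I do not expect any real obstacle; the only point requiring a little care is verifying that the lower-central-series term $L^c$ is a $\mathbb{Z}_2$-graded superideal (so the quotient is a Lie superalgebra of the expected class), but this is automatic from the inductive definition of $L^n$ using the super-bracket. No appeal to parity signs beyond super-bilinearity is needed, so the super case is no more subtle than the Lie algebra case cited from \cite{H} and \cite{F}.
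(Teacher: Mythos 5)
Your argument is correct. Note that the paper does not actually prove this lemma: it states it with a \qed and refers the reader to the Lie-algebra literature (Marshall's and Towers's results on the Frattini subalgebra, where the standard route is to show that for a nilpotent algebra $L^2$ lies in the Frattini subalgebra, i.e.\ consists of non-generators). Your induction on the nilpotency class $c$ is a clean, self-contained substitute that transfers verbatim to the super setting: the base case $c=1$ is trivial, the quotient $L/L^c$ has smaller class and inherits the hypothesis because $L^c\subseteq L^2$, and the centrality $[L^c,L]=0$ kills all cross terms in $[K+L^c,K+L^c]$, giving $L^2\subseteq[K,K]\subseteq K$ and hence $K=L$. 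All the steps check out ($K+L^c$ is indeed a graded subalgebra, and only super-bilinearity of the bracket is used, so no parity signs intervene). What your approach buys is independence from the Frattini-theory machinery cited in the paper, at the cost of a short induction; what the paper's citation buys is brevity and a pointer to the general theory of non-generators, which however is only stated there for ordinary Lie algebras, so your explicit super verification is arguably the more complete justification.
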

\begin{proposition}\label{pro1}
Suppose $L$ is a finite-dimensional
 nilpotent Lie superalgebra. Then $\mathrm{sdim}(L/L^{2})$ is the unique minimal generator number pair of $L$.
\end{proposition}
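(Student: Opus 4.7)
The plan is to show two things: first, that $L$ admits a homogeneous generating set of size exactly $(p,q) := \mathrm{sdim}(L/L^{2})$; second, that no other number pair can be the size of a minimal generating set. Both ingredients rely on the derived subsuperalgebra $L^{2}$ and on Lemma \ref{lemmaliu2}.

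First I would fix $(p,q) = \mathrm{sdim}(L/L^{2})$ and choose a homogeneous basis of $L/L^{2}$ consisting of $p$ even cosets and $q$ odd cosets; call the representatives $x_{1},\ldots,x_{p}\in L_{\bar 0}$ and $y_{1},\ldots,y_{q}\in L_{\bar 1}$. Let $K$ be the subsuperalgebra they generate. Since the images of these elements span $L/L^{2}$, we have $K + L^{2}=L$, and Lemma \ref{lemmaliu2} immediately upgrades this to $K=L$. Thus $\{x_{1},\ldots,x_{p}\mid y_{1},\ldots,y_{q}\}$ is a homogeneous generating set of size $(p,q)$.

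Next I would show that any homogeneous generating set $\{a_{1},\ldots,a_{s}\mid b_{1},\ldots,b_{t}\}$ of $L$ satisfies $(p,q)\le(s,t)$. The cosets $\bar a_{i},\bar b_{j}$ generate the quotient $L/L^{2}$; but $L/L^{2}$ is abelian, so generation and linear span coincide there. Consequently the even cosets $\bar a_{i}$ span $(L/L^{2})_{\bar 0}$ and the odd cosets $\bar b_{j}$ span $(L/L^{2})_{\bar 1}$, forcing $s\ge\dim(L/L^{2})_{\bar 0}=p$ and $t\ge\dim(L/L^{2})_{\bar 1}=q$.

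Combining the two steps finishes the argument: the lifted set from the first step is minimal because, by the second step, no generating set of strictly smaller size (in the partial order) exists; hence $(p,q)$ is a minimal generator number pair. Conversely, if $(s,t)$ is any minimal generator number pair, witnessed by some generating set of that size, then $(s,t)\ge(p,q)$ by the second step, while the lifted set of size $(p,q)$ generates $L$; so minimality of $(s,t)$ forbids $(p,q)<(s,t)$, leaving only $(s,t)=(p,q)$. The sole nontrivial ingredient is the lifting step, which is already packaged in Lemma \ref{lemmaliu2}; once that is invoked, the remaining work is just partial-order bookkeeping on the abelian quotient $L/L^{2}$.
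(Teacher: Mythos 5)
Your proposal is correct and follows essentially the same route as the paper: both obtain the lower bound $(s,t)\geq\mathrm{sdim}(L/L^{2})$ from the fact that the abelian quotient $L/L^{2}$ turns generation into linear spanning, and both lift a homogeneous basis of $L/L^{2}$ to a generating set of $L$ via Lemma \ref{lemmaliu2}. Your write-up merely spells out the partial-order bookkeeping (needed because $\leq$ on $\mathbb{Z}\times\mathbb{Z}$ is only a partial order) a bit more explicitly than the paper does.
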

\begin{proof}
 Suppose $(p, q)$ is a minimal generator number pair of $L$. Since $L/L^{2}$ is abelian, we have $(p, q)\geq \mathrm{sdim}(L/L^{2})$.  Suppose $\mathrm{sdim}L/L^{2}=(s,t)$ and $(x_{1},\ldots, x_s\mid y_1,\ldots,y_t)$ is a tuple of homogeneous elements in $L$ such that
 $(x_{1}+L^2,\ldots, x_s+L^2\mid y_1+L^2,\ldots,y_t+L^2)$ is a basis of $L/L^{2}$. If follows from Lemma \ref{lemmaliu2} that $\{x_{1},\ldots, x_s, y_1,\ldots,y_t\}$ generates the whole $L$. Consequently, $(p, q)= \mathrm{sdim}(L/L^{2}).$ The proof is complete.
\end{proof}

As an application of Proposition \ref{pro1}, we can characterize a family of  finite-dimensional nilpotent Lie superalgebras:
\begin{corollary} Suppose $L$ is a finite-dimensional nilpotent Lie superalgebra and ${\dim} L/L^{2}=1$. Then $L$ is isomorphic to one of the following Lie superalgebras:

(1) 1-dimensional Lie superalgebra with   trivial odd part;

(2) 1-dimensional Lie superalgebra with  trivial even part;

(3) 2-dimensional Heisenberg Lie superalgebra of even center, $H(0,1)$,  having a standard basis $\{z\mid w\}$ with multiplication given by $[w, w]=z$.
\end{corollary}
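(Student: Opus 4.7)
The plan is to invoke Proposition \ref{pro1} to immediately reduce to two cases, determined by the parity of the single generator of $L$. Since $\dim L/L^{2}=1$, Proposition \ref{pro1} tells us that the unique minimal generator number pair of $L$ equals $\mathrm{sdim}(L/L^{2})$, which must be either $(1,0)$ or $(0,1)$. Thus $L$ is generated as a Lie superalgebra by a single homogeneous element, either even or odd, and I only need to classify such superalgebras.

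In the even case, I would observe that if $L$ is generated by one even element $x$, then super antisymmetry gives $[x,x]=-[x,x]$, and since $\mathrm{char}\,\mathbb{F}\neq 2$ this forces $[x,x]=0$. Hence $L^{2}=0$ and $L=\mathbb{F}x$, which is precisely case (1).

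In the odd case, let $w$ be the single odd generator. Then $L$ is spanned by the iterated brackets $w,\ [w,w],\ [w,[w,w]],\ldots$, so the task is to show this sequence collapses after one step. The key input is the super Jacobi identity applied to the triple $(w,w,w)$: with all three entries of odd parity, each summand picks up a sign $(-1)^{1\cdot 1}=-1$, and the three summands coincide, yielding $3[w,[w,w]]=0$. Since $\mathrm{char}\,\mathbb{F}\neq 3$, we conclude $[w,[w,w]]=0$. Therefore $L=\mathbb{F}w+\mathbb{F}[w,w]$, and splitting on whether $[w,w]$ vanishes produces case (2) or, setting $z:=[w,w]$, a two-dimensional superalgebra with basis $\{z\mid w\}$, bracket $[w,w]=z$ and $z$ central; this is the Heisenberg superalgebra $H(0,1)$ of case (3).

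I do not anticipate a real obstacle beyond recognizing that the two torsion relations $2[x,x]=0$ (even generator) and $3[w,[w,w]]=0$ (odd generator) are exactly the reason the paper's standing hypothesis $\mathrm{char}\,\mathbb{F}\neq 2,3$ is imposed. Apart from these two super–antisymmetry/Jacobi computations, the argument is a direct unpacking of definitions following a single application of Proposition \ref{pro1}.
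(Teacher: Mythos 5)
Your proposal is correct and follows essentially the same route as the paper: apply Proposition \ref{pro1} to reduce to a single homogeneous generator, use $\mathrm{char}\,\mathbb{F}\neq 2$ to kill $[x,x]$ in the even case and $\mathrm{char}\,\mathbb{F}\neq 3$ together with the super Jacobi identity to get $[w,[w,w]]=0$ in the odd case. The only difference is that you spell out the ``direct argument'' the paper leaves implicit (that $[w,w]$ is then central, so $L=\mathbb{F}w+\mathbb{F}[w,w]$), which is a welcome elaboration rather than a deviation.
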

\begin{proof}
We have $\mathrm{sdim} L/L^{2}=(1,0)$ or $(0,1)$. By Proposition \ref{pro1}, if $\mathrm{sdim} L/L^{2}=(1,0)$, then $L$ is a 1-dimensional Lie superalgebra with   trivial odd part. Suppose $\mathrm{sdim} L/L^{2}=(0,1)$. By  Proposition \ref{pro1} again, $L$ is generated by one odd element $w$. Since $\mathrm{char}F\neq 3,$ we have $[w,[w,w]]=0$.
 Then our corollary follows from  a direct argument.
\end{proof}

Now we are in position to determine the $\mathrm{ID}^{*}$-superderivation superalgebras of the generalized Heisenberg Lie superalgebras.
\begin{proposition}\label{le1}
Suppose $H$ is a generalized Heisenberg Lie superalgebra of superdimension
$
\mathrm{sdim}H=(m, n)$ and
$\mathrm{sdim}\mathrm{Z}(H)=(m_{1}, n_{1}).
$
Then

(1) $\mathrm{ID}^{*}(H)$ is isomorphic to the Lie superalgebra consisting of  matrices
$$
\left(
  \begin{array}{cc|cc}
    0    & 0 & 0  & 0 \\
    A    & 0 & C &  0 \\\hline
    0 &   0 & 0 & 0 \\
   D & 0 & B   & 0\\
  \end{array}
\right)\in \frak{gl}(m|n),
$$
where $A, B, C$ and $D$ are arbitrary matrices of formats $m_{1}\times (m-m_{1})$, $n_{1}\times (n-n_{1})$, $n_{1}\times (m-m_{1})$  and $m_{1}\times (n-n_{1})$, respectively.

(2) $\mathrm{sdim}\mathrm{ID}^{*}(H)$ attains the upper bound $\lambda(H^{2}; p, q)$, where $(p, q)$ is the minimal generator number pair of $H/\mathrm{Z}(H)$, which coincides  with the superdimension of $H/\mathrm{Z}(H)$.

\end{proposition}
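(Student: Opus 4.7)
The plan is to fix an adapted homogeneous basis of $H$ and read off the matrix of any $\alpha\in\mathrm{ID}^{*}(H)$ from its values on the basis. Explicitly, I would pick $\{z_{1},\ldots,z_{m_{1}}\}$ spanning $\mathrm{Z}(H)_{\bar{0}}$ and complete it to a basis $\{z_{1},\ldots,z_{m_{1}},x_{1},\ldots,x_{m-m_{1}}\}$ of $H_{\bar{0}}$; analogously $\{w_{1},\ldots,w_{n_{1}}\}$ spanning $\mathrm{Z}(H)_{\bar{1}}$ completed by $\{y_{1},\ldots,y_{n-n_{1}}\}$ on the odd side. With respect to this basis, every $\alpha\in\mathrm{ID}^{*}(H)$ has a matrix in $\mathfrak{gl}(m|n)$ split into a $(m_{1},m-m_{1}\mid n_{1},n-n_{1})$-block pattern.

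For part~(1), the condition $\alpha(\mathrm{Z}(H))=0$ kills the $z_{i}$- and $w_{j}$-columns, and $\alpha(H)\subseteq H^{2}=\mathrm{Z}(H)$ restricts the remaining $x_{i}$- and $y_{j}$-columns to land in central rows; separating by the parity of $\alpha$ produces precisely the four rectangular blocks $A,B,C,D$ of the stated formats. Conversely, any linear map $\alpha\colon H\to H$ satisfying $\alpha(\mathrm{Z}(H))=0$ and $\alpha(H)\subseteq\mathrm{Z}(H)$ is automatically a superderivation, because $\alpha[u,v]=0$ (as $[u,v]\in H^{2}\subseteq\ker\alpha$) and $[\alpha u,v]=[u,\alpha v]=0$ (as $\alpha u,\alpha v\in\mathrm{Z}(H)$). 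To upgrade the resulting linear bijection to a Lie superalgebra isomorphism, I would observe that $\mathrm{ID}^{*}(H)$ is in fact abelian, since $\alpha\beta(H)\subseteq\alpha(\mathrm{Z}(H))=0$ for all $\alpha,\beta\in\mathrm{ID}^{*}(H)$; a parallel block-multiplication check confirms that the bracket on the matrix superspace also vanishes.

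For part~(2), $H^{2}=\mathrm{Z}(H)$ makes $H/\mathrm{Z}(H)=H/H^{2}$ finite-dimensional and abelian, hence nilpotent of class one; Proposition~\ref{pro1} then yields the unique minimal generator number pair $(p,q)=\mathrm{sdim}(H/\mathrm{Z}(H))=(m-m_{1},n-n_{1})$. Substituting $\mathrm{sdim}\,H^{2}=(m_{1},n_{1})$ into $\lambda(H^{2};p,q)$ and comparing with the superdimension of $\mathrm{ID}^{*}(H)$ counted from the block ranks in~(1), both pairs equal $\bigl((m-m_{1})m_{1}+(n-n_{1})n_{1},\,(n-n_{1})m_{1}+(m-m_{1})n_{1}\bigr)$, so the upper bound of Theorem~\ref{c} is attained. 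The main potential obstacle is the block-matrix bookkeeping needed to match the announced shapes of $A,B,C,D$ to the correct central/non-central row and column strips, but once one notices that the hypothesis $H^{2}=\mathrm{Z}(H)$ simultaneously trivializes the superderivation identity and the Lie bracket on $\mathrm{ID}^{*}(H)$, the entire argument collapses to a direct basis-level verification.
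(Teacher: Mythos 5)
Your proposal is correct and follows essentially the same route as the paper: choose a homogeneous basis adapted to $\mathrm{Z}(H)\subset H$ and read off the block pattern forced by $\alpha(\mathrm{Z}(H))=0$ and $\alpha(H)\subseteq H^{2}=\mathrm{Z}(H)$, then compare the resulting superdimension with $\lambda(H^{2};m-m_{1},n-n_{1})$. You merely order the central basis vectors first where the paper puts them last (a cosmetic transposition of the blocks), and you spell out two points the paper leaves as ``clearly'': that any linear map killing $\mathrm{Z}(H)$ with image in $\mathrm{Z}(H)$ is automatically a superderivation, and that both sides of the isomorphism are abelian.
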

\begin{proof}
Suppose
\begin{equation}\label{eq53}
\{x_{1}, \ldots, x_{m-m_{1}}, x_{m-m_{1}+1}, \ldots, x_{m} \mid y_{1}, \ldots, y_{n-n_{1}}, y_{n-n_{1}+1}, \ldots, y_{n}\}
\end{equation}
is a basis of $H$ such that $\mathrm{Z}(H)$ is spanned by
$$
\{x_{m-m_{1}+1}, \ldots, x_{m} \mid y_{n-n_{1}+1}, \ldots, y_{n}\}.
$$
Clearly, an even linear transformation of $H$  is an $\mathrm{ID}^{*}$-superderivation if and only if its matrix with respect to  basis (\ref{eq53}) is of  form
$$
\left(
  \begin{array}{cc|cc}
    0    & 0 & \   & \ \\
    A    & 0 & \ &  \ \\\hline
    \ &   \ & 0 & 0 \\
   \ & \ & B   & 0\\
  \end{array}
\right),
$$
where $A$  is an arbitrary $m_{1}\times (m-m_{1})$ matrix  and  $B$ is an  arbitrary $n_{1}\times (n-n_{1})$ matrix. In particular,
$$
\mathrm{dimID^{*}_{\bar{0}}}(H)=mm_{1}+nn_{1}-m_{1}^{2}-n_{1}^{2}.$$
Similarly, an odd linear transformation of $H$ is an $\mathrm{ID}^{*}$-superderivation if and only if its matrix with respect to  basis (\ref{eq53}) is of  form
$$
\left(
  \begin{array}{cc|cc}
    \   & \ & 0    & 0 \\
    \  & \ & C & 0 \\ \hline
    0 &   0& \ & \ \\
   D & 0 & \  & \ \\
  \end{array}
\right),
$$
where
$C$ is an arbitrary $n_{1}\times (m-m_{1})$ matrix and $D$ is an arbitrary $m_{1}\times (n-n_{1})$ matrix.
Therefore, $$\mathrm{dimID^{*}_{\bar{1}}}(H)=nm_{1}+mn_{1}-2n_{1}m_{1}.$$
Since $H/\mathrm{Z}(H)$ is abelian and $\mathrm{sdim}(H/\mathrm{Z}(H))=(m-m_{1}, n-n_{1})$, we have $(p,q)=(m-m_{1}, n-n_{1})$ is the minimal generator number pair of $H/\mathrm{Z}(H)$. Hence
\begin{equation*}
\mathrm{\lambda}(H^{2}; m-m_{1}, n-n_{1})=(mm_{1}+nn_{1}-m_{1}^{2}-n_{1}^{2}, nm_{1}+mn_{1}-2n_{1}m_{1}).
\end{equation*}
 It follows that
$$
\mathrm{sdim}\mathrm{ID}^{*}(H)=\mathrm{\lambda}(H^{2}; m-m_{1}, n-n_{1}).
$$
The proof is complete.
\end{proof}

For further information on  Heisenberg Lie superalgebras, the reader is referred to \cite{M}.
Let us consider the $\mathrm{ID}^{*}$-superderivation superalgebras of nilpotent Lie superalgebras of class 2.
As in Lie algebra case, a nilpotent Lie superalgebra of class 2 is a direct sum of a generalized Heisenberg Lie superalgebra and an abelian Lie superalgebra \cite{J}:
\begin{proposition}\label{f}
Let $L$ be a finite-dimensional nilpotent Lie superalgebra of  class $2$. Then
\begin{equation}\label{stdcom}
L=H\oplus S,
\end{equation} where $H$ is a generalized Heisenberg Lie subsuperalgebra and $S$ is a central superideal of $L$.
\end{proposition}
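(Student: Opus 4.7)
The plan is to exploit the nilpotence of class $2$, which gives $L^{3}=[L^{2},L]=0$, i.e.\ $L^{2}\subseteq \mathrm{Z}(L)$, and then to split off a central part $S$ transversal to $L^{2}$. All the relevant subspaces ($L^{2}$, $\mathrm{Z}(L)$) are $\mathbb{Z}_{2}$-graded, so every complement can be chosen to be a homogeneous subspace, which is all one needs in the super setting.

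Concretely, I would first choose a homogeneous subspace $S$ of $\mathrm{Z}(L)$ such that $\mathrm{Z}(L)=L^{2}\oplus S$, and then choose a homogeneous complement $T$ of $\mathrm{Z}(L)$ in $L$, so that $L=T\oplus L^{2}\oplus S$. Setting $H:=T\oplus L^{2}$ gives a homogeneous decomposition $L=H\oplus S$ with $L^{2}\subseteq H$. Then $[H,H]\subseteq L^{2}\subseteq H$, so $H$ is a Lie subsuperalgebra, and $S\subseteq \mathrm{Z}(L)$ is automatically a central superideal. Since $[L,L]=[H+S,H+S]=[H,H]$, one immediately gets $H^{2}=L^{2}$.

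It remains to check $H^{2}=\mathrm{Z}(H)$. The inclusion $H^{2}\subseteq \mathrm{Z}(H)$ is free: $H^{2}=L^{2}\subseteq \mathrm{Z}(L)$ commutes with everything in $L$, hence with everything in $H$. For the reverse, given $x\in \mathrm{Z}(H)$, I would write $[x,L]=[x,H]+[x,S]=0$, using $[x,H]=0$ and the fact that $S$ is central; this places $x$ in $\mathrm{Z}(L)\cap H = (L^{2}\oplus S)\cap H = L^{2}=H^{2}$, where the last equality uses $L^{2}\subseteq H$ and $H\cap S=0$. Finally, $H\neq 0$ since $L^{2}\neq 0$ (class $2$), so $H$ really is a generalized Heisenberg Lie subsuperalgebra in the sense of Definition \ref{dd}.

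I don't anticipate a serious obstacle: the argument is the same as in the ungraded case from \cite{J}, and the only thing to be careful about is that every complement is taken within the $\mathbb{Z}_{2}$-graded category, which is possible because $L^{2}$ and $\mathrm{Z}(L)$ are themselves homogeneous superideals. The crucial inputs are simply $L^{2}\subseteq \mathrm{Z}(L)$ and the freedom to choose graded complements.
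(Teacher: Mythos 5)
Your proof is correct and follows essentially the same route as the paper: both split $\mathrm{Z}(L)=L^{2}\oplus S$ using $L^{2}\subseteq \mathrm{Z}(L)$, take $H$ to be (the preimage of) a graded complement of $\mathrm{Z}(L)$ modulo $L^{2}$, and verify $\mathrm{Z}(H)=\mathrm{Z}(L)\cap H=L^{2}=H^{2}$ exactly as you do. The only difference is cosmetic — you build $H=T\oplus L^{2}$ directly rather than lifting a complement from $L/L^{2}$ — and you are slightly more careful than the paper in noting $H\neq 0$.
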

\begin{proof}   Obviously, $L^{2}\subset \mathrm{Z}(L)$ and there exists a central superideal $S$ such that $\mathrm{Z}(L)=L^{2} \oplus S$. Since $L/L^{2}$ is abelian, there exists an superideal $H$ of $L$ containing $L^{2}$ such that $L/L^{2}=H/L^{2}\oplus \mathrm{Z}(L)/L^{2}$.
Then
\begin{eqnarray*}
L=H+\mathrm{Z}(L)
=H+L^{2}+S
=H+S.
\end{eqnarray*}
Note that $H\cap \mathrm{Z}(L)=L^{2}$ and $ L^{2}=H^{2}$. Since $S\cap L^{2}=0$, we have
$$
S\cap H=S\cap H\cap \mathrm{Z}(L)=S\cap L^{2}=0.
$$
Hence $L= H\oplus S$. We claim that $\mathrm{Z}(H)=L^{2}$. In fact, it is clear that $L^{2}\subset \mathrm{Z}(H)$. On the other hand, since
\begin{eqnarray*}
[\mathrm{Z}(H), L]=[\mathrm{Z}(H),  H+\mathrm{Z}(H)]=0,
\end{eqnarray*}
we have $\mathrm{Z}(H)\subset \mathrm{Z}(L)\cap H=L^{2}$. Hence $\mathrm{Z}(H)=L^{2}=H^{2}$ and then $H$ is a generalized Heisenberg Lie superalgebra. The proof is complete.
\end{proof}

  We call (\ref{stdcom}) a  standard composition for a nilpotent Lie superalgebra of class 2.
 Note that the superdimension of $L/\mathrm{Z}(L)$ is just the unique minimal generator number pair of $L/\mathrm{Z}(L)$, since $L/\mathrm{Z}(L)$ is abelian.
\begin{theorem}\label{tonggou}
Let $L$ be a finite-dimensional Lie superalgebra of nilpotent class $2$ and $(p, q)$  the minimal generator number pair of $L/\mathrm{Z}(L)$. Suppose $L=H\oplus S$ is a standard decomposition (Proposition \ref{f}) and
$$
\mathrm{sdim}H=(m, n), \ \mathrm{sdim}\mathrm{Z}(H)=(m_{1}, n_{1}), \ \mathrm{sdim}S=(s, t).
$$
Then

(1) $\mathrm{ID}^{*}(L)$ is isomorphic to the Lie superalgebra consisting of  matrices
$$
\left(
  \begin{array}{cc|cc}
    0    & 0 & 0  & 0 \\
    A    & 0 & C &  0 \\
    0    & 0 & 0  & 0 \\\hline
    0 &   0 & 0 & 0 \\
   D & 0 & B   & 0\\
    0    & 0 & 0  & 0 \\
  \end{array}
\right)\in \frak{gl}(m|n),
$$
where $A, B, C$ and $D$ are arbitrary matrices of formats $m_{1}\times (m-m_{1})$, $n_{1}\times (n-n_{1})$, $n_{1}\times (m-m_{1})$  and $m_{1}\times (n-n_{1})$, respectively.

(2) $\mathrm{sdim}\mathrm{ID}^{*}(L)$ attains  the upper bound $\lambda(L^{2}; p, q)$.
\end{theorem}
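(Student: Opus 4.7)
The plan is to reduce the problem to Proposition \ref{le1} by exploiting the standard decomposition $L=H\oplus S$. The key structural facts I would record first are: since $L$ is nilpotent of class $2$, $L^{2}\subseteq \mathrm{Z}(L)$; since $S$ is a central superideal, $L^{2}=H^{2}$; and the proof of Proposition \ref{f} already yields $\mathrm{Z}(H)=H^{2}=L^{2}$, whence $\mathrm{Z}(L)=\mathrm{Z}(H)\oplus S$ and $L^{2}=\mathrm{Z}(H)\subseteq \mathrm{Z}(L)$. These identifications are the engine of everything that follows.

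Next I would fix a homogeneous basis of $L$ adapted to the decomposition: extend a basis of $\mathrm{Z}(H)$ to a basis $\{x_{1},\dots,x_{m}\mid y_{1},\dots,y_{n}\}$ of $H$ so that the last $m_{1}$ even and last $n_{1}$ odd vectors span $\mathrm{Z}(H)$, and then append a basis $\{u_{1},\dots,u_{s}\mid v_{1},\dots,v_{t}\}$ of $S$. In this ordering the matrix of every endomorphism of $L$ has block sizes $(m-m_{1},m_{1},s)$ in the even part and $(n-n_{1},n_{1},t)$ in the odd part, matching the layout displayed in the theorem.

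Now I would describe $\mathrm{ID}^{*}(L)$ intrinsically. Any $\alpha\in\mathrm{ID}^{*}(L)$ satisfies $\alpha(L)\subseteq L^{2}=\mathrm{Z}(H)$ and $\alpha(\mathrm{Z}(L))=0$, so $\alpha$ annihilates $\mathrm{Z}(H)$ and $S$ and takes values in $\mathrm{Z}(H)$. Conversely, any such homogeneous linear map is automatically a superderivation: for any homogeneous $x,y\in L$ the bracket $[x,y]$ lies in $L^{2}=\mathrm{Z}(H)\subseteq \ker\alpha$, so the left-hand side of the superderivation identity vanishes, and $\alpha(x),\alpha(y)\in \mathrm{Z}(H)\subseteq \mathrm{Z}(L)$ makes the right-hand side vanish as well. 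Hence $\mathrm{ID}^{*}(L)$ is exactly the space of parity-homogeneous linear maps $L/\mathrm{Z}(L)\to \mathrm{Z}(H)$, extended by zero on $\mathrm{Z}(L)$.

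Part (1) then follows immediately by reading off the blocks: an even such map contributes precisely $A$ of size $m_{1}\times(m-m_{1})$ and $B$ of size $n_{1}\times(n-n_{1})$, while an odd such map contributes $C$ of size $n_{1}\times(m-m_{1})$ and $D$ of size $m_{1}\times(n-n_{1})$, with all remaining blocks forced to zero because either the domain lies in $\mathrm{Z}(L)$ or the target lies outside $\mathrm{Z}(H)$. For part (2) I would compute $\dim\mathrm{ID}^{*}_{\bar{0}}(L)=(m-m_{1})m_{1}+(n-n_{1})n_{1}$ and $\dim\mathrm{ID}^{*}_{\bar{1}}(L)=(m-m_{1})n_{1}+(n-n_{1})m_{1}$, then compare with $\lambda(L^{2};p,q)$ for $(p,q)=(m-m_{1},n-n_{1})$, using the remark preceding the theorem that $\mathrm{sdim}(L/\mathrm{Z}(L))=(m-m_{1},n-n_{1})$ is itself the minimal generator number pair of the abelian quotient. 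I do not anticipate any genuine obstacle: the class-$2$ hypothesis trivializes the superderivation identity, and the rest is careful bookkeeping of the block sizes, so the proof is essentially a packaged version of the argument used for Proposition \ref{le1}.
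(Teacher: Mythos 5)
Your proposal is correct and follows essentially the same route as the paper: the identifications $L^{2}=H^{2}=\mathrm{Z}(H)$ and $\mathrm{Z}(L)=\mathrm{Z}(H)\oplus S$ reduce everything to the generalized Heisenberg case, where the class-$2$ hypothesis makes every homogeneous linear map vanishing on $\mathrm{Z}(L)$ with image in $L^{2}$ automatically a superderivation. The paper simply states $\mathrm{ID}^{*}(L)\cong\mathrm{ID}^{*}(H)$ and cites Proposition \ref{le1}, whereas you unpack that isomorphism and the block/dimension count explicitly; the substance is the same.
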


\begin{proof}
Since $L=H\oplus S$ is a standard decomposition of $L$, we have
$$
L^{2}=H^{2}, \ \mathrm{Z}(L)=\mathrm{Z}(H)\oplus S.
$$
Therefore, one sees that $\mathrm{ID}^{*}(L)\cong \mathrm{ID}^{*}(H)$.  Then our theorem follows from Proposition \ref{le1}.
\end{proof}

Finally, let us consider the $\mathrm{ID}^{*}$-superderivation superalgebras of model filiform Lie superalgebras.
By Proposition \ref{pro1}, we have the following corollary.

\begin{corollary}\label{coro1}
The minimal generator number pair of model filiform Lie superalgebra $L^{n, m}$ is $(2, 1)$.\qed
\end{corollary}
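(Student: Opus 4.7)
The plan is to invoke Proposition \ref{pro1}, which reduces the determination of the minimal generator number pair to the computation of $\mathrm{sdim}(L^{n,m}/(L^{n,m})^{2})$. Since $L^{n,m}$ is a filiform Lie superalgebra (hence finite-dimensional and nilpotent, as its super-nilindex is $(n,m)$), Proposition \ref{pro1} applies and yields uniqueness of the minimal generator number pair as well as its identification with $\mathrm{sdim}(L^{n,m}/(L^{n,m})^{2})$.

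The key step is therefore to compute $(L^{n,m})^{2}$ explicitly. Reading off the defining multiplication table, the only nonzero brackets are $[x_{0},x_{i}]=x_{i+1}$ for $1\leq i\leq n-1$ and $[x_{0},y_{j}]=y_{j+1}$ for $1\leq j\leq m-1$. Hence $(L^{n,m})^{2}$ is spanned by $\{x_{2},\ldots,x_{n}\mid y_{2},\ldots,y_{m}\}$, so $\mathrm{sdim}(L^{n,m})^{2}=(n-1,m-1)$.

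Combining this with $\mathrm{sdim}L^{n,m}=(n+1,m)$, I obtain
$$
\mathrm{sdim}(L^{n,m}/(L^{n,m})^{2})=(n+1-(n-1),\,m-(m-1))=(2,1),
$$
and Proposition \ref{pro1} finishes the argument.

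There is no real obstacle: the only minor care needed is to confirm that all unlisted brackets among the basis vectors $x_{i}$ and $y_{j}$ (in particular $[x_{i},x_{k}]$, $[x_{i},y_{j}]$ for $i\geq 1$, and $[y_{j},y_{k}]$) are indeed zero by convention in the definition of $L^{n,m}$, so that no extra elements enter $(L^{n,m})^{2}$. Once that is granted, the computation above is immediate.
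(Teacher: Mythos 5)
Your proposal is correct and is exactly the argument the paper intends: the corollary is stated with the remark ``By Proposition \ref{pro1}'' and the proof is left to the reader, the content being precisely your computation that $(L^{n,m})^{2}$ is spanned by $\{x_{2},\ldots,x_{n}\mid y_{2},\ldots,y_{m}\}$, so $\mathrm{sdim}(L^{n,m}/(L^{n,m})^{2})=(2,1)$. Your care about the edge cases $n=1$ or $m=1$ (where one of the spanning families is empty) is warranted and the conclusion still holds there.
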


\begin{theorem}\label{th1}
Let $L=L^{n, m}$ be a model filiform Lie superalgebra.

(1) If $m> n=1$, then $\mathrm{ID}^{*}(L)$ is isomorphic to the Lie superalgebra consisting of  matrices
$$
\left(
  \begin{array}{cc|cc}
    \    & \ & \  & \ \\
    \    & \ & \  & \ \\\hline
    0 &   0 & 0 & 0 \\
   D & 0 & B   & 0\\
  \end{array}
\right)\in \frak{gl}(m|n),
$$
where $B$ is of form
\begin{equation}\label{matricesb}
\left(
  \begin{array}{cccc}
    b_{1,1}  & \ & \  &\     \\
    b_{2,1}  & b_{1,1} & \  &\     \\
    \vdots & \vdots &  \ddots& \  \\
   b_{m-1,1}& b_{m-2,1}  & \cdots  &  b_{1,1}\\
  \end{array}
\right),
\end{equation} $D$ is of form
\begin{equation}\label{matricesd1}
\left(
  \begin{array}{c}
    d_{1,1}  \\
    \vdots \\
   d_{m-1,1} \\
  \end{array}
\right)
\end{equation}
with  $b_{ij}$, $d_{kl}$  being arbitrary elements in $\mathbb{F}$.

If $n> m=1$, then $\mathrm{ID}^{*}(L)$ is isomorphic to the Lie superalgebra consisting of  matrices
$$
\left(
  \begin{array}{cc|cc}
      0 &0  & \    & \ \\
      A& 0 & \ & \ \\\hline
     \ &  \  & \ &  \\
   \ & \ & \  &   \\
  \end{array}
\right) \in \frak{gl}(m|n)     ,
$$
where $A$ is of form
\begin{equation}\label{matricesa}
\left(
  \begin{array}{ccccc}
    a_{1,1}  & a_{1,2}& \    & \ & \ \\
     a_{2,1}  & a_{2,2}& a_{1,2}& \ &\ \\
   \vdots &  \vdots& \vdots& \ddots & \  \\
   a_{n-1,1}& a_{n-1,2}& a_{n-2,2}& \cdots &a_{1,2} \\
  \end{array}
\right) \end{equation}
with $a_{ij}$ being  arbitrary elements in $\mathbb{F}$.

If $m\geq n\geq 2$, then $\mathrm{ID}^{*}(L)$ is isomorphic to the Lie superalgebra consisting of  matrices
$$
\left(
  \begin{array}{cc|cc}
      0 &0  & 0    & 0 \\
      A& 0 & C & 0 \\\hline
     0 &  0  & 0 & 0 \\
   D & 0 & B  &  0 \\
  \end{array}
\right)\in \frak{gl}(m|n)      ,
$$
where $A$ is of form (\ref{matricesa}), $B$ is of form (\ref{matricesb}), $D$ is of form
\begin{equation}\label{matricesd3}\left(
  \begin{array}{ccccc}
    d_{1,1}  & d_{1,2}   & \ & \ \\
    \vdots &  \vdots & \ddots &\ \\
   d_{n-1,1}& d_{n-1,2}  & \cdots  &d_{1,2}\\
    \vdots &  \vdots & \ &\vdots \\
   d_{m-1,1}& d_{m-1,2}& \cdots  &d_{m-n+1,2}\\

  \end{array}
\right)
\end{equation}
and $C$ is of form
\begin{equation}\label{matricesc3}\left(
  \begin{array}{cccc}
    c_{1,1}  & \ & \  &\     \\
    c_{2,1}  & c_{1,1}& \  &\  \\
    \vdots &  \vdots&  \ddots& \ \\
   c_{n-1,1}&  c_{n-2,1}&\cdots    &c_{1,1}\\
  \end{array}
\right)
\end{equation}
with $c_{st}$ and $d_{mn}$ being arbitrary elements in $\mathbb{F}$.

If $n> m\geq 2$, then $\mathrm{ID}^{*}(L)$ is isomorphic to the Lie superalgebra consisting of  matrices
$$
\left(
  \begin{array}{cc|cc}
      0 &0  & 0    & 0 \\
      A& 0 & C & 0 \\\hline
     0 &  0  & 0 & 0 \\
   D & 0 & B  &  0 \\
  \end{array}
\right)\in \frak{gl}(m|n)      ,
$$
where $A$ is of form (\ref{matricesa}), $B$ is of form (\ref{matricesb}), $C$ is of form
\begin{equation}\label{matricesc4}\left(
  \begin{array}{ccccc}
    c_{1,1}  & \   & \ & \ \\
    c_{2,1}  & c_{1,1}   & \ & \ \\
    \vdots &  \vdots & \ddots &\ \\
   c_{m-1,1}& c_{m-2,1}  & \cdots  &c_{1,1}\\
    \vdots &  \vdots & \ &\vdots \\
   c_{n-1,1}& c_{n-2,1}& \cdots  &c_{n-m+1,2}\\
\end{array}
\right)
\end{equation}
and $D$ is of form
\begin{equation}\label{matricesd4}\left(
  \begin{array}{ccccc}
    d_{1,1}  & d_{1,2}& \    & \ & \ \\
     d_{2,1}  & d_{2,2}& d_{1,2}& \ &\ \\
   \vdots &  \vdots& \vdots& \ddots & \  \\
   d_{m-1,1}& d_{m-1,2}& d_{m-2,2}& \cdots &d_{1,2} \\
  \end{array}
\right)
\end{equation}
with $c_{st}$ and $d_{mn}$ being arbitrary elements in $\mathbb{F}$.

(2) $\mathrm{sdim}\mathrm{ID}^{*}(L)$ attains  the upper bound $\lambda(L^{2}; p, q)$, where $(p, q)$ is the minimal generator number pair of $L/\mathrm{Z}(L)$. Moreover
\begin{equation*}
(p, q)=\left\{
               \begin{array}{ll}
                 (1, 1), & m>n=1\\
                 (2, 0), & n>m=1\\
                 (2, 1), & m\geq 2, n\geq2\\
                 \end{array}
       \right.
\end{equation*}

\end{theorem}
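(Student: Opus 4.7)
The plan is a case-by-case explicit computation in the spirit of Proposition~\ref{le1}. Since $L^{n,m}$ is generated by $\{x_0, x_1 \mid y_1\}$ (with the obvious adjustment when $n=1$ or $m=1$), any super-derivation is determined by its three values on these generators, reducing the problem to finite-dimensional linear algebra. I would first record the structural data: $L^2 = \mathrm{span}\{x_2,\ldots,x_n,y_2,\ldots,y_m\}$, $\mathrm{Z}(L) = \mathrm{span}\{x_n, y_m\}$ (with the obvious degeneracies when $n=1$ or $m=1$), and $(L/\mathrm{Z}(L))/(L/\mathrm{Z}(L))^2$ has superdimension $(p,q)$ as listed in~(2); applying Proposition~\ref{pro1} to the nilpotent quotient $L/\mathrm{Z}(L)$ identifies this with its minimal generator number pair.

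For a homogeneous $\alpha \in \mathrm{ID}^*(L)$, the condition $\alpha(L) \subseteq L^2$ constrains $\alpha(x_0),\alpha(x_1),\alpha(y_1)$ to the appropriate homogeneous component of $L^2$, while the super-derivation axiom applied to the defining relations $[x_0,x_i]=x_{i+1}$ and $[x_0,y_j]=y_{j+1}$ yields
\begin{align*}
\alpha(x_{i+1}) &= [\alpha(x_0), x_i] + [x_0, \alpha(x_i)],\\
\alpha(y_{j+1}) &= [\alpha(x_0), y_j] + [x_0, \alpha(y_j)],
\end{align*}
with no sign factor since $|x_0|=\bar 0$. The crucial simplification is that $\alpha(x_0)\in L^2$, whose basis elements $x_k$ (for $k\geq 2$) and $y_\ell$ (for $\ell\geq 2$) commute with every $x_i, y_j$ for $i,j\geq 1$; hence $[\alpha(x_0),x_i]=0=[\alpha(x_0),y_j]$ and the recurrences collapse to the pure shift $\alpha(z_{k+1}) = (\mathrm{ad}\,x_0)(\alpha(z_k))$. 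Iterating these shifts produces the Toeplitz-column structure of the blocks $A, B, C, D$ in (\ref{matricesa})-(\ref{matricesd4}): the first column records the freely chosen image of a generator ($x_0$ for $A$ and $D$, $y_1$ for $B$ and $C$), and each subsequent column is the previous one shifted down by one position.

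Imposing the two centrality conditions $\alpha(x_n)=0$ and $\alpha(y_m)=0$ then translates into truncation requirements on the shift chains; depending on the relative sizes of $m$ and $n$, this is either vacuous (when the appropriate chain is already short enough) or forces some leading entries of the Toeplitz block of $\alpha(x_1)$ or $\alpha(y_1)$ to vanish, which accounts for the different shapes of $C$ and $D$ across the four sub-cases of~(1). With the matrix description established, part~(2) follows by a direct parameter count in each sub-case, checking that the totals equal $p\dim L^2_{\bar 0}+q\dim L^2_{\bar 1}$ and $q\dim L^2_{\bar 0}+p\dim L^2_{\bar 1}$, and therefore match $\lambda(L^2;p,q)$ as given by Theorem~\ref{c}. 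The hard part will be the bookkeeping for the mixed-parity sub-cases $m\geq n\geq 2$ and $n>m\geq 2$, where the Toeplitz shapes of $C$ and $D$ differ and the centrality truncations must be aligned with the correct indices to recover the shapes in (\ref{matricesc3})-(\ref{matricesd4}) exactly; the even-on-even and odd-on-odd blocks $A, B$ instead fit the same template for all sub-cases because the centrality of $x_n, y_m$ is automatic from the shift recurrence.
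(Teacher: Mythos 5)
Your reduction is, in substance, the argument the paper compresses into the word ``Clearly'': a superderivation is determined by its values on the generators $x_0,x_1,y_1$; since $\alpha(x_0)\in L^2$ brackets trivially with every $x_i$ and $y_j$ ($i,j\geq 1$), the derivation identity collapses to the shift recurrences $\alpha(x_{i+1})=[x_0,\alpha(x_i)]$ and $\alpha(y_{j+1})=[x_0,\alpha(y_j)]$, which produce the lower-triangular Toeplitz blocks; the only remaining conditions are $\alpha(x_n)=\alpha(y_m)=0$. All of that is correct and is exactly the computation the paper's proof presupposes without writing down.

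The gap is in the step you defer as ``bookkeeping'', and it is not cosmetic: carried out, the truncation does \emph{not} recover the displayed shapes. For odd $\alpha$ write $\alpha(x_1)=\sum_{\ell=2}^{m}d_{\ell-1,2}\,y_{\ell}$; then $\alpha(x_n)=(\mathrm{ad}\,x_0)^{n-1}\alpha(x_1)=\sum_{\ell=2}^{m-n+1}d_{\ell-1,2}\,y_{\ell+n-1}$, so centrality of $x_n$ forces $d_{1,2}=\cdots=d_{m-n,2}=0$ whenever $m>n\geq 2$, whereas (\ref{matricesd3}) lists all of these as free parameters; symmetrically, for $n>m\geq 2$ the condition $\alpha(y_m)=0$ forces $c_{1,1}=\cdots=c_{n-m,1}=0$, which (\ref{matricesc4}) does not reflect. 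The odd dimension is therefore $2n+m-3$ for $m\geq n\geq 2$ and $3(m-1)$ for $n>m\geq 2$, not $2m+n-3$, and the bound $\lambda(L^{2};2,1)$ is attained only when $m=n$. A three-line check on $L^{2,3}$ makes this concrete: an odd $\alpha$ with $\alpha(x_1)=cy_2+dy_3$ has $\alpha(x_2)=cy_3$, so $x_2\in\mathrm{Z}(L)$ forces $c=0$ and $\dim\mathrm{ID}^{*}_{\bar 1}(L^{2,3})=4<5=(\lambda(L^{2};2,1))_{\bar 1}$. So your plan, executed faithfully, refutes rather than proves the two mixed sub-cases of (1) and the corresponding part of (2); before the proof can be completed you must either exhibit additional odd $\mathrm{ID}^{*}$-superderivations that escape your generator reduction (there are none, since $\{x_0,x_1\mid y_1\}$ generates) or amend the target statement.
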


\begin{proof}
 {\textit{Case 1:   $m> n=1$.}} Then
$$
\mathrm{sdim}L^{2}=(0, m-1), \ \mathrm{sdim}\mathrm{Z}(L)=(1, 1).
$$
It is easy to see that $\mathrm{sdim}(L/\mathrm{Z}(L))/(L/\mathrm{Z}(L))^{2}=(1, 1)$.
By Proposition \ref{pro1}, one sees that $(1, 1)$ is the minimal generator number pair of $L/\mathrm{Z}(L)$ and
\begin{equation}\label{case1}
\mathrm{\lambda}(L^{2}; 1, 1)=(m-1, m-1).
\end{equation}
Clearly, an even linear transformation of $L$ is an $\mathrm{ID}^{*}$-superderivation if and only if its matrix with respect to basis (\ref{q2}) is of form
$$
\left(
  \begin{array}{cc|cc}
       &  & \    & \ \\
      &  & \ & \ \\\hline
    \ &   \ & 0 & 0 \\
   \ & \ & B  &  0 \\
  \end{array}
\right)      ,
$$
where $B$ is of form (\ref{matricesb}). Hence, ${\dim}\mathrm{ID}^{*}_{\bar{0}}(L)=m-1$. Similarly, an odd linear transformation of $L$ is an $\mathrm{ID}^{*}$-superderivation if and only if its matrix with respect to  basis (\ref{q2}) is of form
$$
\left(
  \begin{array}{cc|cc}
    \   & \ &    &  \\
    \  & \ &  & \\\hline
    0 &   0& \ & \ \\
   D& 0& \  & \ \\
  \end{array}
\right)      ,$$
where $D$ is of form (\ref{matricesd1}). Therefore ${\dim}\mathrm{ID}^{*}_{\bar{1}}(L)=m-1$. Then by (\ref{case1}), we have
$$
\mathrm{sdim}\mathrm{ID}^{*}(L)=\mathrm{\lambda}(L^{2}; 1, 1).
$$

\textit{Case 2:   $n> m=1$.} Then
$$
\mathrm{sdim}L^{2}=(n-1, 0), \ \mathrm{sdim}\mathrm{Z}(L)=(1, 1).
$$
It is easy to see that $\mathrm{sdim}(L/\mathrm{Z}(L))/(L/\mathrm{Z}(L))^{2}=(2, 0)$.
By Proposition \ref{pro1}, one sees that $(2, 0)$ is the minimal generator number pair of $L/\mathrm{Z}(L)$ and
\begin{equation}\label{2eq3}
\mathrm{\lambda}(L^{2}; 2, 0)=(2n-2, 0).
\end{equation}
Clearly, an even linear transformation of $L$ is an $\mathrm{ID}^{*}$-superderivation if and only if its matrix with respect to basis (\ref{q2}) is of form
$$
\left(
  \begin{array}{cc|cc}
      0 &0  & \    & \ \\
      A& 0 & \ & \ \\\hline
     \ &  \  & \ &  \\
   \ & \ & \  &   \\
  \end{array}
\right)      ,
$$
where $A$ is of form (\ref{matricesa}). Hence, ${\dim}\mathrm{ID}^{*}_{\bar{0}}(L)=2n-2$. Similarly, an odd linear transformation of $L$ is an $\mathrm{ID}^{*}$-superderivation if and only if it is $0$. Therefore ${\dim}\mathrm{ID}^{*}_{\bar{1}}(L)=0$. Then by (\ref{2eq3}), we have
$$
\mathrm{sdim}\mathrm{ID}^{*}(L)=\mathrm{\lambda}(L^{2}; 2, 0).
$$

\textit{Case 3:  $m\geq n\geq 2$.} Then
$$\mathrm{sdim}L^{2}=(n-1, m-1),\ L/\mathrm{Z}(L) \cong L^{n-1, m-1}.
$$
It follows from  Corollary \ref{coro1} that $(2, 1)$ is the minimal generator number pair of $L/\mathrm{Z}(L)$ and
\begin{equation}\label{case3}
\mathrm{\lambda}(L^{2}; 2, 1)=(2n+m-3, 2m+n-3).
\end{equation}
Clearly, an even linear transformation of $L$ is an $\mathrm{ID}^{*}$-superderivation if and only if its matrix with respect to basis (\ref{q2}) is of form
$$
\left(
  \begin{array}{cc|cc}
    0    & 0 & \    & \ \\
    A  & 0 & \ & \ \\\hline
    \ &   \ & 0 & 0 \\
   \ & \ & B  &  0 \\
  \end{array}
\right)      ,
$$
where $A$ is of form (\ref{matricesa})
and $B$ is of form (\ref{matricesb}). Hence, ${\dim}\mathrm{ID}^{*}_{\bar{0}}(L)=2n+m-3$. Similarly, an odd linear transformation of $L$ is an $\mathrm{ID}^{*}$-superderivation if and only if its matrix with respect to  basis (\ref{q2}) is of  form
$$
\left(
  \begin{array}{cc|cc}
    \   & \ & 0    & 0 \\
    \  & \ &  C& 0\\\hline
    0 &   0& \ & \ \\
   D& 0& \  & \ \\
  \end{array}
\right)      ,$$
where $D$ is of form (\ref{matricesd3}) and $C$ is of form (\ref{matricesc3}). Therefore ${\dim}\mathrm{ID}^{*}_{\bar{1}}(L)=2m+n-3$. Then by (\ref{case3}), we have
$$
\mathrm{sdim}\mathrm{ID}^{*}(L)=\mathrm{\lambda}(L^{2}; 2, 1).
$$

\textit{Case 4:  $n> m\geq 2$.} Then
$$\mathrm{sdim}L^{2}=(n-1, m-1),\ L/\mathrm{Z}(L) \cong L^{n-1, m-1}.
$$
It follows from  Corollary \ref{coro1} that $(2, 1)$ is the minimal generator number pair of $L/\mathrm{Z}(L)$ and
\begin{equation}\label{1eq3}
\mathrm{\lambda}(L^{2}; 2, 1)=(2n+m-3, 2m+n-3).
\end{equation}
Clearly, an even linear transformation of $L$ is an $\mathrm{ID}^{*}$-superderivation if and only if its matrix with respect to basis (\ref{q2}) is of form
$$
\left(
  \begin{array}{cc|cc}
    0    & 0 & \    & \ \\
    A  & 0 & \ & \ \\\hline
    \ &   \ & 0 & 0 \\
   \ & \ & B  &  0 \\
  \end{array}
\right)      ,
$$
where $A$ is of form (\ref{matricesa})
and $B$ is of form (\ref{matricesb}). Hence, ${\dim}\mathrm{ID}^{*}_{\bar{0}}(L)=2n+m-3$. Similarly, an odd linear transformation of $L$ is an $\mathrm{ID}^{*}$-superderivation if and only if its matrix with respect to  basis (\ref{q2}) is of  form
$$
\left(
  \begin{array}{cc|cc}
    \   & \ & 0    & 0 \\
    \  & \ &  C& 0\\\hline
    0 &   0& \ & \ \\
   D& 0& \  & \ \\
  \end{array}
\right)      ,$$
where $C$ is of form (\ref{matricesc4}) and $D$ is of form (\ref{matricesd4}). Therefore, ${\dim}\mathrm{ID}^{*}_{\bar{1}}(L)=2m+n-3$. Then by (\ref{1eq3}), we have
$$
\mathrm{sdim}\mathrm{ID}^{*}(L)=\mathrm{\lambda}(L^{2}; 2, 1).
$$ The proof is complete.
\end{proof}

For a more detailed description of the first cohomology groups of  model filiform Lie superalgebras with coefficients in their adjoint modules, the reader is  referred to \cite{L}

\end{document}